\newtheorem{theorem}{Theorem}
\theoremstyle{plain}
\newtheorem{corollary}{Corollary}
\newtheorem{lemma}{Lemma}
\newtheorem{remark}{Remark}
\numberwithin{equation}{section}
\begin{document}
\title[Two Theorems for $\left( \alpha ,m\right) -$convex functions]{Two
Theorems for $\left( \alpha ,m\right) -$convex functions}
\author{M.Emin Ozdemir$^{\blacklozenge }$}
\address{$^{\blacklozenge }$ATATURK UNIVERSITY, K.K. EDUCATION FACULTY,
DEPARTMENT OF MATHEMATICS, 25240 CAMPUS, ERZURUM, TURKEY}
\email{emos@atauni.edu.tr}
\author{Merve Avci Ardic$^{\ast ,\diamondsuit }$}
\thanks{$^{\diamondsuit }$Corresponding Author}
\address{$^{\ast }$ADIYAMAN UNIVERSITY, FACULTY OF SCIENCE AND ART,
DEPARTMENT OF MATHEMATICS, 02040, ADIYAMAN, TURKEY}
\email{mavci@posta.adiyama.edu.tr}
\subjclass[2000]{Primary 05C38, 15A15; Secondary 05A15, 15A18}
\keywords{$m-$convex function, $\left( \alpha ,m\right) -$convex function,
Power-mean inequality}

\begin{abstract}
In this paper, we obtain some new inequalities for $\left( \alpha ,m\right) -
$convex functions. The analysis used in the proofs is fairly elementary and
based on the use of Power-mean inequality.
\end{abstract}

\maketitle

\section{INTRODUCTION}

Throughout this paper, let $\left\Vert g\right\Vert _{\infty }=\sup_{t\in
\lbrack a,b]}\left\vert g(t)\right\vert .$

The following inequality is well known in the literature as the
Hermite-Hadamard integral inequality:%
\begin{equation}
f\left( \frac{a+b}{2}\right) \leq \frac{1}{b-a}\int_{a}^{b}f(x)dx\leq \frac{%
f(a)+f(b)}{2}  \label{1.0}
\end{equation}%
where $f:I\subseteq 
\mathbb{R}
\rightarrow 
\mathbb{R}
$ is a convex function on the interval $I$ of real numbers and $a,b\in I$
with $a<b.$

In \cite{M}, Mihe\c{s}an introduced the class of $\left( \alpha ,m\right) -$%
convex functions as the following:

The function $f:[0,b]\rightarrow 
\mathbb{R}
$ is said to be $\left( \alpha ,m\right) -$convex, where $\left( \alpha
,m\right) \in \lbrack 0,1]^{2},$ if for every $x,y\in \lbrack 0,b]$ and $%
t\in \lbrack 0,1],$ we have%
\begin{equation*}
f(tx+m(1-t)y)\leq t^{\alpha }f(x)+m(1-t^{\alpha })f(y).
\end{equation*}

Note that for $\left( \alpha ,m\right) \in \left\{ \left( 0,0\right) ,\left(
\alpha ,0\right) ,\left( 1,0\right) ,\left( 1,m\right) ,\left( 1,1\right)
,\left( \alpha ,1\right) \right\} $ one obtains the following classes of
functions: increasing, $\alpha -$starshaped, starshaped, $m-$convex, convex
and $\alpha -$convex.

For some results concerning $m-$convex and $\left( \alpha ,m\right) -$convex
functions see \cite{MBP}-\cite{MMO}.

In \cite{THYL}, Tseng et al. used the following identities to prove their
results:

\begin{lemma}
\label{lem 1.1} Let $f:I^{\circ }\subseteq 
\mathbb{R}
\rightarrow 
\mathbb{R}
$ be a differentiable mapping on $I^{\circ },$ $a,b\in I^{\circ }$ with $a<b$
and let $g:[a,b]\rightarrow 
\mathbb{R}
$. If $f^{\prime },g\in L[a,b],$ then , for all $x\in \lbrack a,b],$ the
following identity holds:%
\begin{equation}
f(a)\int_{a}^{x}g(s)ds+f(b)\int_{x}^{b}g(s)ds-\int_{a}^{b}f(s)g(s)ds=%
\int_{a}^{b}\left[ \int_{x}^{t}g(s)ds\right] f^{\prime }(t)dt.  \label{1.7}
\end{equation}
\end{lemma}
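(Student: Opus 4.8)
The plan is to prove the identity by integrating the right-hand side by parts. First I would introduce the antiderivative $G(t)=\int_{x}^{t}g(s)\,ds$, which is well defined for every $t\in[a,b]$ since $g\in L[a,b]$; by the fundamental theorem of calculus $G$ is absolutely continuous on $[a,b]$ with $G^{\prime }(t)=g(t)$ for a.e.\ $t$, and it satisfies $G(x)=0$. This single function packages the two boundary integrals appearing on the left, since $G(b)=\int_{x}^{b}g(s)\,ds$ and $G(a)=\int_{x}^{a}g(s)\,ds=-\int_{a}^{x}g(s)\,ds$.

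Next I would rewrite the right-hand side as $\int_{a}^{b}G(t)f^{\prime }(t)\,dt$ and apply integration by parts with $u=G(t)$ and $dv=f^{\prime }(t)\,dt$, so that $du=g(t)\,dt$ and $v=f(t)$. This step is legitimate because $G$ is absolutely continuous and $f$ is differentiable with $f^{\prime }\in L[a,b]$, so the product $Gf$ is absolutely continuous and the integration-by-parts formula is valid under the stated hypotheses. This yields
\[
\int_{a}^{b}G(t)f^{\prime }(t)\,dt=\left[ G(t)f(t)\right] _{a}^{b}-\int_{a}^{b}g(t)f(t)\,dt.
\]

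Finally I would evaluate the boundary term. Substituting the values of $G(a)$ and $G(b)$ computed above, the bracket becomes $f(b)\int_{x}^{b}g(s)\,ds+f(a)\int_{a}^{x}g(s)\,ds$, where the sign flip in the $a$-term comes precisely from reversing the limits of integration. Inserting this back and noting that $\int_{a}^{b}g(t)f(t)\,dt=\int_{a}^{b}f(s)g(s)\,ds$ reproduces the left-hand side of \eqref{1.7} exactly, which completes the argument.

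I do not expect a genuine obstacle here: the whole proof is a direct application of integration by parts, and the only point requiring care is the bookkeeping of the sign of the boundary term at $t=a$ (obtained by swapping the limits in $G(a)$). The sole technical caveat worth stating explicitly is the justification of integration by parts, which is guaranteed by the absolute continuity of $G$ together with the integrability assumptions on $f^{\prime }$ and $g$.
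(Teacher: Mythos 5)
Your proof is correct: setting $G(t)=\int_{x}^{t}g(s)\,ds$ and integrating $\int_{a}^{b}G(t)f^{\prime}(t)\,dt$ by parts, with $G(a)=-\int_{a}^{x}g(s)\,ds$ and $G(b)=\int_{x}^{b}g(s)\,ds$, yields exactly the identity (\ref{1.7}). Note that the paper itself gives no proof of this lemma --- it is quoted from Tseng et al. \cite{THYL} --- and your integration-by-parts argument is precisely the standard derivation, with the one genuinely delicate point (that $f$ differentiable everywhere with $f^{\prime}\in L[a,b]$ is absolutely continuous, so the parts formula applies) correctly flagged.
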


\begin{lemma}
\label{lem 1.2} Under the assumptions of Lemma \ref{lem 1.2}, we have the
idedntity%
\begin{equation*}
f(x)\int_{a}^{b}g(s)ds-\int_{a}^{b}f(s)g(s)ds=\int_{a}^{b}S_{g}(t)f^{\prime
}(t)dt
\end{equation*}%
where 
\begin{equation*}
S_{g}(t)=\left\{ 
\begin{array}{c}
\int_{a}^{t}g(s)ds,\text{ \ \ \ \ }t\in \lbrack a,x) \\ 
\\ 
-\int_{t}^{b}g(s)ds,\text{ \ \ \ \ }t\in \lbrack x,b].%
\end{array}%
\right.
\end{equation*}%
Define 
\begin{equation*}
S(t)=\left\{ 
\begin{array}{c}
t-a,\text{ \ }t\in \lbrack a,x) \\ 
b-t,\text{ \ \ }t\in \lbrack x,b]\text{\ .\ \ \ }%
\end{array}%
\right.
\end{equation*}%
Then $\left\vert S_{g}(t)\right\vert \leq \left\Vert g\right\Vert _{\infty
}S(t),$ $t\in \lbrack a,b].$
\end{lemma}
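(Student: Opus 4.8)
The plan is to prove the identity by splitting the integral on the right at the point $x$ and integrating by parts on each of the two pieces, then to dispatch the inequality by a direct application of the triangle inequality for integrals.

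First I would write $G(t)=\int_a^t g(s)\,ds$, so that $G$ is an antiderivative of $g$ with $G(a)=0$, and observe that the definition of $S_{g}$ becomes $S_{g}(t)=G(t)$ on $[a,x)$ and $S_{g}(t)=G(t)-G(b)$ on $[x,b]$. Splitting the target integral accordingly,
\[
\int_a^b S_{g}(t)f'(t)\,dt=\int_a^x G(t)f'(t)\,dt+\int_x^b\bigl(G(t)-G(b)\bigr)f'(t)\,dt,
\]
I would integrate each piece by parts taking $u$ to be the bracketed antiderivative and $dv=f'(t)\,dt$, using $G'=g$. The first piece yields $G(x)f(x)-\int_a^x g(t)f(t)\,dt$ (the endpoint term at $a$ vanishing since $G(a)=0$), and the second yields $\bigl(G(b)-G(x)\bigr)f(x)-\int_x^b g(t)f(t)\,dt$ (the endpoint term at $b$ vanishing since $G(b)-G(b)=0$).

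The key step is then to add these two expressions and watch the boundary contributions at $x$ recombine: they sum to $G(x)f(x)+\bigl(G(b)-G(x)\bigr)f(x)=G(b)f(x)=f(x)\int_a^b g(s)\,ds$, while the two integrals join into $\int_a^b g(t)f(t)\,dt$. This produces exactly $f(x)\int_a^b g(s)\,ds-\int_a^b f(s)g(s)\,ds$, which is the left-hand side. Alternatively, one can derive the identity directly from Lemma \ref{lem 1.1}: since $S_{g}(t)-\int_x^t g(s)\,ds$ equals the constant $G(x)$ on $[a,x)$ and the constant $G(x)-G(b)$ on $[x,b]$, subtracting the two statements reduces everything to evaluating $\int_a^b\bigl(S_{g}(t)-\int_x^t g(s)\,ds\bigr)f'(t)\,dt$ by the fundamental theorem of calculus, and the bookkeeping collapses to the same answer.

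For the bound, on $[a,x)$ the triangle inequality for integrals gives $\left\vert S_{g}(t)\right\vert=\left\vert\int_a^t g(s)\,ds\right\vert\le\int_a^t\left\vert g(s)\right\vert\,ds\le\left\Vert g\right\Vert_\infty(t-a)=\left\Vert g\right\Vert_\infty S(t)$, and on $[x,b]$ the same reasoning gives $\left\vert S_{g}(t)\right\vert=\left\vert\int_t^b g(s)\,ds\right\vert\le\left\Vert g\right\Vert_\infty(b-t)=\left\Vert g\right\Vert_\infty S(t)$. I do not anticipate a genuine obstacle here; the only point requiring care is the bookkeeping of the piecewise definition and the verification that the two boundary terms at $x$ reassemble into $f(x)\int_a^b g(s)\,ds$ rather than leaving a residual jump, since $S_{g}$ itself is discontinuous at $x$ (with jump $\int_a^b g(s)\,ds$) while the quantity entering the identity is not.
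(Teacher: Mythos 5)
Your proof is correct: the decomposition of $\int_{a}^{b}S_{g}(t)f^{\prime }(t)\,dt$ at the point $x$, the integration by parts on each piece with $G(t)=\int_{a}^{t}g(s)\,ds$ (noting $G(a)=0$ and $G(b)-G(b)=0$ kill the outer boundary terms while the two contributions at $x$ recombine into $f(x)\int_{a}^{b}g(s)\,ds$), and the triangle-inequality bound $\left\vert S_{g}(t)\right\vert \leq \left\Vert g\right\Vert _{\infty }S(t)$ are all sound. The paper itself states this lemma without proof, citing Tseng et al.\ \cite{THYL}, and your argument is precisely the standard one used there (your alternative reduction to Lemma \ref{lem 1.1}, exploiting that $S_{g}(t)-\int_{x}^{t}g(s)\,ds$ is piecewise constant, is also valid), so nothing needs to be added.
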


\begin{theorem}
\label{teo 1.3}Let $f:I^{\circ }\subseteq 
\mathbb{R}
\rightarrow 
\mathbb{R}
$ be a differentiable mapping on $I^{\circ },$ $a,b\in I^{\circ }$ with $%
a<b,f^{\prime }\in L[a,b],$ $q\geq 1,$ $\left\vert f^{\prime }\right\vert
^{q}$ is convex on $[a,b]$ and $g:[a,b]\rightarrow 
\mathbb{R}
$ be a continuous mapping on $[a,b]$. Then, for all $x\in \lbrack a,b],$ we
have the inequality:%
\begin{eqnarray}
&&\left\vert
f(a)\int_{a}^{x}g(s)ds+f(b)\int_{x}^{b}g(s)ds-\int_{a}^{b}f(s)g(s)ds\right%
\vert  \label{1.3} \\
&\leq &\left\Vert g\right\Vert _{\infty }\left[ \frac{\left( x-a\right)
^{2}+\left( b-x\right) ^{2}}{2}\right] ^{\frac{q-1}{q}}  \notag \\
&&\times \left\{ \frac{\left( x-a\right) ^{2}\left( 3b-x-2a\right) +\left(
b-x\right) ^{3}}{6\left( b-a\right) }\left\vert f^{\prime }(a)\right\vert
^{q}\right.  \notag \\
&&\left. +\frac{\left( x-a\right) ^{3}+\left( b-x\right) ^{2}\left(
2b+x-3a\right) }{6\left( b-a\right) }\left\vert f^{\prime }(b)\right\vert
^{q}\right\} ^{\frac{1}{q}}.  \notag
\end{eqnarray}
\end{theorem}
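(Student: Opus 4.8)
The plan is to start from the integral identity \eqref{1.7} of Lemma \ref{lem 1.1}, since the left-hand side of \eqref{1.3} is exactly the quantity appearing there. Thus I would write
\[
f(a)\int_{a}^{x}g(s)ds+f(b)\int_{x}^{b}g(s)ds-\int_{a}^{b}f(s)g(s)ds=\int_{a}^{b}\left[ \int_{x}^{t}g(s)ds\right] f^{\prime }(t)dt,
\]
take absolute values, and pass them inside the integral. The first elementary bound is $\left\vert \int_{x}^{t}g(s)ds\right\vert \leq \left\Vert g\right\Vert _{\infty }\left\vert t-x\right\vert$, valid for every $t\in \lbrack a,b]$ (the sign of $t-x$ being irrelevant after taking moduli), which turns the right-hand side into $\left\Vert g\right\Vert _{\infty }\int_{a}^{b}\left\vert t-x\right\vert \left\vert f^{\prime }(t)\right\vert dt$.

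Next I would invoke the power-mean inequality with the weight $\left\vert t-x\right\vert$ and exponent $q\geq 1$, splitting $\left\vert t-x\right\vert \left\vert f^{\prime }(t)\right\vert =\left\vert t-x\right\vert ^{1-1/q}\cdot \left\vert t-x\right\vert ^{1/q}\left\vert f^{\prime }(t)\right\vert$. This gives
\[
\int_{a}^{b}\left\vert t-x\right\vert \left\vert f^{\prime }(t)\right\vert dt\leq \left( \int_{a}^{b}\left\vert t-x\right\vert dt\right) ^{\frac{q-1}{q}}\left( \int_{a}^{b}\left\vert t-x\right\vert \left\vert f^{\prime }(t)\right\vert ^{q}dt\right) ^{\frac{1}{q}}.
\]
A direct computation, splitting the integral at $x$, yields $\int_{a}^{b}\left\vert t-x\right\vert dt=\frac{(x-a)^{2}+(b-x)^{2}}{2}$, which already produces the factor $\left[ \frac{(x-a)^{2}+(b-x)^{2}}{2}\right] ^{\frac{q-1}{q}}$ of \eqref{1.3}.

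For the remaining factor I would use that $\left\vert f^{\prime }\right\vert ^{q}$ is convex, so that for $t\in \lbrack a,b]$ one has $\left\vert f^{\prime }(t)\right\vert ^{q}\leq \frac{b-t}{b-a}\left\vert f^{\prime }(a)\right\vert ^{q}+\frac{t-a}{b-a}\left\vert f^{\prime }(b)\right\vert ^{q}$, writing $t$ as the convex combination $\frac{b-t}{b-a}a+\frac{t-a}{b-a}b$. Substituting this bound splits $\int_{a}^{b}\left\vert t-x\right\vert \left\vert f^{\prime }(t)\right\vert ^{q}dt$ into the two weighted moments $\frac{1}{b-a}\int_{a}^{b}\left\vert t-x\right\vert (b-t)dt$ and $\frac{1}{b-a}\int_{a}^{b}\left\vert t-x\right\vert (t-a)dt$, multiplied by $\left\vert f^{\prime }(a)\right\vert ^{q}$ and $\left\vert f^{\prime }(b)\right\vert ^{q}$ respectively.

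The main (and essentially only) obstacle is the careful evaluation of these two weighted integrals, each of which must be broken as $\int_{a}^{x}(x-t)(\cdot)dt+\int_{x}^{b}(t-x)(\cdot)dt$ because of the absolute value. Carrying out these elementary polynomial integrations gives $\int_{a}^{b}\left\vert t-x\right\vert (b-t)dt=\frac{(x-a)^{2}(3b-x-2a)+(b-x)^{3}}{6}$ and $\int_{a}^{b}\left\vert t-x\right\vert (t-a)dt=\frac{(x-a)^{3}+(b-x)^{2}(2b+x-3a)}{6}$, which are precisely the coefficients of $\left\vert f^{\prime }(a)\right\vert ^{q}$ and $\left\vert f^{\prime }(b)\right\vert ^{q}$ appearing inside the braces of \eqref{1.3}. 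Combining the three estimates then yields the claimed inequality.
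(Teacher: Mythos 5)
Your proposal is correct and follows exactly the scheme the paper itself uses: the paper quotes Theorem \ref{teo 1.3} from Tseng et al.\ without reproving it, but its proof of Theorem \ref{teo 2.1} is precisely your argument (the identity of Lemma \ref{lem 1.1}, the bound $\vert\int_{x}^{t}g(s)ds\vert\leq\Vert g\Vert_{\infty}\vert t-x\vert$, the power-mean inequality with weight $\vert t-x\vert$, then convexity of $\vert f^{\prime}\vert^{q}$ along $t=\frac{b-t}{b-a}a+\frac{t-a}{b-a}b$) carried out for $(\alpha,m)$-convexity, which reduces to your case at $\alpha=m=1$ as Remark \ref{rem 2.2} notes. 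Your evaluations of the two weighted moments, $\int_{a}^{b}\vert t-x\vert(b-t)dt=\frac{(x-a)^{2}(3b-x-2a)+(b-x)^{3}}{6}$ and $\int_{a}^{b}\vert t-x\vert(t-a)dt=\frac{(x-a)^{3}+(b-x)^{2}(2b+x-3a)}{6}$, check out, so the proof is complete.
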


\begin{corollary}
\label{co 1.1} Let $g:[a,b]\rightarrow 
\mathbb{R}
$ symmetric to $\frac{a+b}{2}$ and $x=\frac{a+b}{2}$ in Theorem \ref{teo 1.3}%
. Then we have the inequality%
\begin{eqnarray}
&&\left\vert \frac{f(a)+f(b)}{2}\int_{a}^{b}g(s)ds-\int_{a}^{b}f(u)g(s)ds%
\right\vert  \label{1.4} \\
&\leq &\left\Vert g\right\Vert _{\infty }\frac{\left( b-a\right) ^{2}}{4}%
\left[ \frac{\left\vert f^{\prime }(a)\right\vert ^{q}+\left\vert f^{\prime
}(b)\right\vert ^{q}}{2}\right] ^{\frac{1}{q}}  \notag
\end{eqnarray}%
which is the "weighted trapezoid" inequality.
\end{corollary}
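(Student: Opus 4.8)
The plan is to specialize Theorem~\ref{teo 1.3} by taking $x=\frac{a+b}{2}$ and invoking the symmetry of $g$; no new inequality is required, only substitution and algebraic simplification. First I would treat the left-hand side. Because $g$ is symmetric about $\frac{a+b}{2}$, the two half-integrals coincide, $\int_{a}^{(a+b)/2}g(s)\,ds=\int_{(a+b)/2}^{b}g(s)\,ds=\tfrac{1}{2}\int_{a}^{b}g(s)\,ds$. Substituting $x=\frac{a+b}{2}$ into the weighting term then collapses it to $f(a)\int_{a}^{x}g(s)\,ds+f(b)\int_{x}^{b}g(s)\,ds=\frac{f(a)+f(b)}{2}\int_{a}^{b}g(s)\,ds$, which is precisely the leading term inside the absolute value of \eqref{1.4}.

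Next I would evaluate the right-hand side under $x=\frac{a+b}{2}$, where $x-a=b-x=\frac{b-a}{2}$. The power-mean prefactor becomes $\left[\frac{(x-a)^{2}+(b-x)^{2}}{2}\right]^{\frac{q-1}{q}}=\left[\frac{(b-a)^{2}}{4}\right]^{\frac{q-1}{q}}$. By the left-right symmetry of the remaining data, the bracketed coefficients of $\left\vert f^{\prime}(a)\right\vert ^{q}$ and $\left\vert f^{\prime}(b)\right\vert ^{q}$ must be equal; a direct computation shows each numerator equals $\frac{3(b-a)^{3}}{4}$, so after dividing by $6(b-a)$ each coefficient reduces to $\frac{(b-a)^{2}}{8}$. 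Thus the braced factor simplifies to $\frac{(b-a)^{2}}{8}\bigl(\left\vert f^{\prime}(a)\right\vert ^{q}+\left\vert f^{\prime}(b)\right\vert ^{q}\bigr)$.

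Finally I would combine the pieces. Merging the two powers of $(b-a)^{2}$ via $\frac{q-1}{q}+\frac{1}{q}=1$ produces the factor $(b-a)^{2}$, while the numerical constants recombine as $4^{-(q-1)/q}\,8^{-1/q}=\frac{1}{4}\,2^{-1/q}$; absorbing the surviving $2^{-1/q}$ into the $q$-th root converts $\left\vert f^{\prime}(a)\right\vert ^{q}+\left\vert f^{\prime}(b)\right\vert ^{q}$ into $\frac{\left\vert f^{\prime}(a)\right\vert ^{q}+\left\vert f^{\prime}(b)\right\vert ^{q}}{2}$. This yields exactly the asserted bound $\left\Vert g\right\Vert _{\infty }\frac{(b-a)^{2}}{4}\left[\frac{\left\vert f^{\prime}(a)\right\vert ^{q}+\left\vert f^{\prime}(b)\right\vert ^{q}}{2}\right]^{\frac{1}{q}}$. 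Since every manipulation is an identity, there is no substantive obstacle; the only point needing care is the exponent bookkeeping, ensuring that the $\frac{q-1}{q}$ and $\frac{1}{q}$ powers recombine correctly and that the constant $2^{-1/q}$ is reabsorbed into the mean on the right.
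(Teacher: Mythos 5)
Your proposal is correct and is exactly the argument the paper intends: Corollary \ref{co 1.1} is obtained by setting $x=\frac{a+b}{2}$ in Theorem \ref{teo 1.3}, using the symmetry of $g$ about $\frac{a+b}{2}$ to rewrite $f(a)\int_{a}^{x}g(s)\,ds+f(b)\int_{x}^{b}g(s)\,ds$ as $\frac{f(a)+f(b)}{2}\int_{a}^{b}g(s)\,ds$, and simplifying the right-hand side, where both numerators indeed equal $\frac{3(b-a)^{3}}{4}$ so each coefficient becomes $\frac{(b-a)^{2}}{8}$. Your exponent bookkeeping ($4^{-(q-1)/q}\,8^{-1/q}=\frac{1}{4}\,2^{-1/q}$, with $2^{-1/q}$ absorbed into the mean) checks out, so the proof is complete and matches the paper's (implicit) derivation.
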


\begin{theorem}
\label{teo 1.4} Let the assumptions of Theorem \ref{teo 1.3} hold. Then, for
all $x\in \lbrack a,b],$ we have the inequality:%
\begin{eqnarray}
&&  \label{1.5} \\
&&\left\vert f(x)\int_{a}^{b}g(s)ds-\int_{a}^{b}f(s)g(s)ds\right\vert  \notag
\\
&\leq &\left\Vert g\right\Vert _{\infty }\left[ \frac{\left( x-a\right)
^{2}+\left( b-x\right) ^{2}}{2}\right] ^{\frac{q-1}{q}}  \notag \\
&&\times \left\{ \frac{\left( x-a\right) ^{2}\left( 3b-a-2x\right) +2\left(
b-x\right) ^{3}}{6\left( b-a\right) }\left\vert f^{\prime }(a)\right\vert
^{q}\right.  \notag \\
&&\left. +\frac{2\left( x-a\right) ^{3}+\left( b-x\right) ^{2}\left(
b+2x-3a\right) }{6\left( b-a\right) }\left\vert f^{\prime }(b)\right\vert
^{q}\right\} ^{\frac{1}{q}}.  \notag
\end{eqnarray}
\end{theorem}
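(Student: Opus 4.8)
The plan is to mirror the strategy behind Theorem \ref{teo 1.3}, but starting from the identity of Lemma \ref{lem 1.2} rather than Lemma \ref{lem 1.1}. First I would invoke Lemma \ref{lem 1.2} to write
\[
f(x)\int_a^b g(s)\,ds - \int_a^b f(s)g(s)\,ds = \int_a^b S_g(t)\,f'(t)\,dt,
\]
then take absolute values and apply the pointwise bound $\left\vert S_g(t)\right\vert \le \left\Vert g\right\Vert_\infty S(t)$ supplied by the same lemma. This reduces the problem to estimating $\left\Vert g\right\Vert_\infty \int_a^b S(t)\left\vert f'(t)\right\vert\,dt$, where $S(t)=t-a$ on $[a,x)$ and $S(t)=b-t$ on $[x,b]$.

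Next I would apply the Power-mean inequality with the nonnegative weight $S(t)$, in the form
\[
\int_a^b S(t)\left\vert f'(t)\right\vert\,dt \le \left(\int_a^b S(t)\,dt\right)^{1-\frac1q} \left(\int_a^b S(t)\left\vert f'(t)\right\vert^q\,dt\right)^{\frac1q}.
\]
A direct computation over the two pieces gives $\int_a^b S(t)\,dt = \frac{(x-a)^2+(b-x)^2}{2}$, which already produces the factor $\left[\frac{(x-a)^2+(b-x)^2}{2}\right]^{\frac{q-1}{q}}$ appearing in the claimed bound. To handle the remaining factor I would exploit the convexity of $\left\vert f'\right\vert^q$: for $t\in[a,b]$ the chord inequality gives $\left\vert f'(t)\right\vert^q \le \frac{b-t}{b-a}\left\vert f'(a)\right\vert^q + \frac{t-a}{b-a}\left\vert f'(b)\right\vert^q$. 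Substituting this into $\int_a^b S(t)\left\vert f'(t)\right\vert^q\,dt$ and splitting linearly yields
\[
\frac{\left\vert f'(a)\right\vert^q}{b-a}\int_a^b S(t)(b-t)\,dt + \frac{\left\vert f'(b)\right\vert^q}{b-a}\int_a^b S(t)(t-a)\,dt.
\]

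The main obstacle is not conceptual but bookkeeping: the piecewise nature of $S(t)$ forces one to integrate $(t-a)(b-t)$ and $(b-t)^2$ over $[a,x]$ and $[x,b]$ for the first integral, and $(t-a)^2$ and $(b-t)(t-a)$ over those same subintervals for the second, then recombine the resulting cubic terms without error. I would guard against slips by re-expressing everything in the variables $p=x-a$ and $r=b-x$, checking that $\int_a^b S(t)(b-t)\,dt = \frac{p^2(p+3r)+2r^3}{6}$ and $\int_a^b S(t)(t-a)\,dt = \frac{2p^3+r^2(3p+r)}{6}$ before translating back; these reduce precisely to the coefficients $\frac{(x-a)^2(3b-a-2x)+2(b-x)^3}{6(b-a)}$ and $\frac{2(x-a)^3+(b-x)^2(b+2x-3a)}{6(b-a)}$ of the statement. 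Finally, raising the bracketed sum to the power $\frac1q$ and restoring the $\left\Vert g\right\Vert_\infty$ factor assembles the asserted inequality.
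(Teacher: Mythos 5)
Your proposal is correct and follows essentially the same route the paper takes: Theorem \ref{teo 1.4} is the $\alpha=m=1$ case of Theorem \ref{teo 2.2} (see Remark \ref{rem 2.4}), whose proof likewise starts from Lemma \ref{lem 1.2}, bounds $\left\vert S_{g}(t)\right\vert \leq \left\Vert g\right\Vert _{\infty }S(t)$, applies the Power-mean inequality with weight $S(t)$, and then uses the pointwise (here: ordinary) convexity estimate for $\left\vert f^{\prime }\right\vert ^{q}$ before computing the piecewise integrals. Your substitution $p=x-a$, $r=b-x$ and the resulting values $\frac{p^{2}(p+3r)+2r^{3}}{6}$ and $\frac{2p^{3}+r^{2}(3p+r)}{6}$ check out and reproduce exactly the coefficients in \eqref{1.5}.
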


\begin{corollary}
\label{co 1.2} Let $x=\frac{a+b}{2}$ in Theorem \ref{teo 1.4}. Then we have
the inequality%
\begin{eqnarray}
&&\left\vert f\left( \frac{a+b}{2}\right)
\int_{a}^{b}g(s)ds-\int_{a}^{b}f(s)g(s)ds\right\vert  \label{1.6} \\
&\leq &\left\Vert g\right\Vert _{\infty }\frac{\left( b-a\right) ^{2}}{4}%
\left[ \frac{\left\vert f^{\prime }(a)\right\vert ^{q}+\left\vert f^{\prime
}(b)\right\vert ^{q}}{2}\right] ^{\frac{1}{q}}  \notag
\end{eqnarray}%
which is the "weighted midpoint" inequality provided that $\left\vert
f^{\prime }\right\vert ^{q}$ is convex on $\left[ a,b\right] .$
\end{corollary}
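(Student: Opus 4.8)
The plan is to run exactly the argument behind Theorem \ref{teo 1.3}, but starting from the identity of Lemma \ref{lem 1.2} in place of the one in Lemma \ref{lem 1.1}. First I would invoke Lemma \ref{lem 1.2} to write
\[
f(x)\int_a^b g(s)\,ds - \int_a^b f(s)g(s)\,ds = \int_a^b S_g(t)f'(t)\,dt,
\]
then take absolute values and apply the triangle inequality together with the pointwise estimate $|S_g(t)| \leq \left\Vert g\right\Vert_{\infty} S(t)$ supplied by the same lemma. This produces
\[
\left|f(x)\int_a^b g(s)\,ds - \int_a^b f(s)g(s)\,ds\right| \leq \left\Vert g\right\Vert_{\infty}\int_a^b S(t)\,|f'(t)|\,dt.
\]

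Next I would decouple the weight $S(t)$ from the derivative factor by the power-mean inequality. Writing $S(t)|f'(t)| = S(t)^{1-1/q}\bigl(S(t)|f'(t)|^q\bigr)^{1/q}$ and applying the inequality with the exponent pair $q$ and $q/(q-1)$ gives
\[
\int_a^b S(t)\,|f'(t)|\,dt \leq \left(\int_a^b S(t)\,dt\right)^{\frac{q-1}{q}}\left(\int_a^b S(t)\,|f'(t)|^q\,dt\right)^{\frac{1}{q}}.
\]
Evaluating $\int_a^b S(t)\,dt = \int_a^x (t-a)\,dt + \int_x^b (b-t)\,dt = \tfrac{(x-a)^2}{2}+\tfrac{(b-x)^2}{2}$ reproduces the factor $\left[\frac{(x-a)^2+(b-x)^2}{2}\right]^{(q-1)/q}$ in the statement.

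The remaining ingredient is the convexity of $|f'|^q$. For each $t \in [a,b]$ I would express $t$ as the convex combination $t = \frac{b-t}{b-a}a + \frac{t-a}{b-a}b$ and bound $|f'(t)|^q \leq \frac{b-t}{b-a}|f'(a)|^q + \frac{t-a}{b-a}|f'(b)|^q$. Substituting this into $\int_a^b S(t)|f'(t)|^q\,dt$ splits it into the two pieces $\frac{|f'(a)|^q}{b-a}\int_a^b S(t)(b-t)\,dt$ and $\frac{|f'(b)|^q}{b-a}\int_a^b S(t)(t-a)\,dt$, each of which is then computed over the subintervals $[a,x)$ and $[x,b]$, where $S(t)$ equals $t-a$ and $b-t$ respectively.

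Finally, these four elementary integrals deliver the coefficients $\frac{(x-a)^2(3b-a-2x)+2(b-x)^3}{6(b-a)}$ and $\frac{2(x-a)^3+(b-x)^2(b+2x-3a)}{6(b-a)}$ in the displayed bound. The only genuine work is the bookkeeping in this last step; I expect the main obstacle to be the algebraic regrouping — for instance checking that $3(b-a)(x-a)^2 - 2(x-a)^3 = (x-a)^2(3b-a-2x)$ and the analogous identity $3(b-a)(b-x)^2 - 2(b-x)^3 = (b-x)^2(b+2x-3a)$ — which, while entirely routine, is where sign and exponent errors are most likely to creep in. Setting $x=\frac{a+b}{2}$ collapses both coefficients to $\frac{(b-a)^2}{4}\cdot\frac12$, recovering Corollary \ref{co 1.2} as a consistency check.
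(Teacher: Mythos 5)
Your argument is correct and follows the same route as the paper: the paper obtains Corollary \ref{co 1.2} simply by substituting $x=\frac{a+b}{2}$ into Theorem \ref{teo 1.4}, whose proof (as carried out in Tseng et al.\ and mirrored in the paper's own Theorem \ref{teo 2.2} with $\alpha=m=1$) is exactly your chain: the identity and bound $\left\vert S_{g}(t)\right\vert \leq \left\Vert g\right\Vert _{\infty }S(t)$ from Lemma \ref{lem 1.2}, the power-mean inequality, the convexity estimate on $\left\vert f^{\prime }\right\vert ^{q}$, and the four elementary integrals. Your coefficient computations, the regrouping identities $3(b-a)(x-a)^{2}-2(x-a)^{3}=(x-a)^{2}(3b-a-2x)$ and $3(b-a)(b-x)^{2}-2(b-x)^{3}=(b-x)^{2}(b+2x-3a)$, and the final collapse at $x=\frac{a+b}{2}$ to $\left\Vert g\right\Vert _{\infty }\frac{(b-a)^{2}}{4}\left[ \frac{\left\vert f^{\prime }(a)\right\vert ^{q}+\left\vert f^{\prime }(b)\right\vert ^{q}}{2}\right] ^{\frac{1}{q}}$ all check out.
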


\section{MAIN RESULTS}

Our main results are given in the following theorems.

\begin{theorem}
\label{teo 2.1} Let $f:I\subseteq \lbrack 0,b^{\ast }]\rightarrow 
\mathbb{R}
$ be a differentiable mapping on $I^{\circ }$ such that $f^{\prime }\in
L[a,b],$ $a,b\in I^{\circ }$ with $a<b$, $b^{\ast }>0$ and let $%
g:[a,b]\rightarrow 
\mathbb{R}
$ be a continuous mapping on $[a,b].$ If $\left\vert f^{\prime }\right\vert
^{q}$ is $\left( \alpha ,m\right) -$convex on $[a,b]$ for $\left( \alpha
,m\right) \in (0,1]^{2},q\geq 1,$ then, for all $x\in \lbrack a,b],$ we have
the inequality: 
\begin{eqnarray*}
&&\left\vert
f(a)\int_{a}^{x}g(u)du+f(b)\int_{x}^{b}g(u)du-\int_{a}^{b}f(u)g(u)du\right%
\vert  \\
&\leq &\left\Vert g\right\Vert _{\infty }\left[ \frac{\left( x-a\right)
^{2}+\left( b-x\right) ^{2}}{2}\right] ^{\frac{q-1}{q}} \\
&&\times \left\{ M\left\vert f^{\prime }(a)\right\vert ^{q}+m\left[ \frac{%
\left( x-a\right) ^{2}+\left( b-x\right) ^{2}}{2}-M\right] \left\vert
f^{\prime }\left( \frac{b}{m}\right) \right\vert ^{q}\right\} ^{\frac{1}{q}},
\end{eqnarray*}%
where $M=\frac{\left( b-a\right) ^{\alpha +1}\left[ 2x-b-a+\alpha \left(
x-a\right) \right] +2\left( b-x\right) ^{\alpha +2}}{(\alpha +1)(\alpha
+2)\left( b-a\right) ^{\alpha }}.$
\end{theorem}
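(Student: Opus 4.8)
The plan is to start from the identity \eqref{1.7} of Lemma \ref{lem 1.1}, which rewrites the quantity inside the absolute value on the left-hand side as $\int_a^b\left[\int_x^t g(s)\,ds\right]f'(t)\,dt$. Taking absolute values and using the elementary bound $\left\vert\int_x^t g(s)\,ds\right\vert\le\|g\|_\infty\,|t-x|$, which holds for $t$ on either side of $x$ since $\|g\|_\infty=\sup_{s\in[a,b]}|g(s)|$, reduces the problem to estimating $\|g\|_\infty\int_a^b|t-x|\,|f'(t)|\,dt$.

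Next I would apply the power-mean inequality to this last integral, splitting the weight as $|t-x|=|t-x|^{1-1/q}\cdot|t-x|^{1/q}$, to obtain
\[
\int_a^b|t-x|\,|f'(t)|\,dt\le\left(\int_a^b|t-x|\,dt\right)^{\frac{q-1}{q}}\left(\int_a^b|t-x|\,|f'(t)|^q\,dt\right)^{\frac1q}.
\]
A direct computation gives $\int_a^b|t-x|\,dt=\frac{(x-a)^2+(b-x)^2}{2}$, which is exactly the factor raised to the power $\frac{q-1}{q}$ in the claimed estimate.

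The heart of the argument is the treatment of the weighted integral of $|f'|^q$. Writing each $t\in[a,b]$ in the convex-type form $t=\frac{b-t}{b-a}\,a+m\left(1-\frac{b-t}{b-a}\right)\frac{b}{m}$ and invoking the $(\alpha,m)$-convexity of $|f'|^q$ for this representation yields
\[
|f'(t)|^q\le\left(\frac{b-t}{b-a}\right)^{\alpha}|f'(a)|^q+m\left[1-\left(\frac{b-t}{b-a}\right)^{\alpha}\right]\left|f'\!\left(\tfrac{b}{m}\right)\right|^q.
\]
Integrating this bound against $|t-x|$ turns the coefficient of $|f'(a)|^q$ into $M:=\frac{1}{(b-a)^\alpha}\int_a^b|t-x|\,(b-t)^\alpha\,dt$, while the coefficient of $m\,|f'(b/m)|^q$ becomes $\int_a^b|t-x|\,dt-M=\frac{(x-a)^2+(b-x)^2}{2}-M$, matching the bracketed structure of the theorem after taking the $\frac1q$-th power.

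The remaining obstacle, and the only nonroutine computation, is evaluating $M$ in closed form. I would split $\int_a^b|t-x|\,(b-t)^\alpha\,dt$ at $t=x$, apply the substitution $u=b-t$ in each piece, and integrate the resulting powers $u^{\alpha+1}$ and $u^{\alpha}$. Collecting all terms over the common denominator $(\alpha+1)(\alpha+2)$, the $(b-x)^{\alpha+2}$ contributions combine with coefficient $2$, and the coefficient of $(b-a)^{\alpha+1}$ simplifies to $(\alpha+2)x-b-(\alpha+1)a=2x-b-a+\alpha(x-a)$. This reproduces precisely the stated value of $M$, and assembling the three factors completes the proof.
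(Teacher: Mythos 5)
Your proposal is correct and takes essentially the same route as the paper's own proof: the identity of Lemma \ref{lem 1.1}, the power-mean (H\"older) step with weight $\left\vert t-x\right\vert$, the $(\alpha,m)$-convexity of $\left\vert f^{\prime}\right\vert^{q}$ applied via the representation $t=\frac{b-t}{b-a}\,a+m\left(1-\frac{b-t}{b-a}\right)\frac{b}{m}$, and the same closed-form evaluation of $M$ and of its complementary integral. The only cosmetic differences are that you extract $\left\Vert g\right\Vert_{\infty}$ before the power-mean inequality rather than after, and that you spell out the substitution $u=b-t$ used to compute $M$, which the paper merely asserts.
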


\begin{proof}
Using Lemma \ref{lem 1.1}, property of modulus, Power-mean integral
inequality and the $\left( \alpha ,m\right) -$convexity of $\left\vert
f^{\prime }\right\vert ^{q}$, it follows that%
\begin{eqnarray*}
&&\left\vert
f(a)\int_{a}^{x}g(u)du+f(b)\int_{x}^{b}g(u)du-\int_{a}^{b}f(u)g(u)du\right%
\vert  \\
&=&\left\vert \int_{a}^{b}\left[ \int_{x}^{t}g(u)du\right] f^{\prime
}(t)dt\right\vert  \\
&\leq &\int_{a}^{b}\left\vert \int_{x}^{t}g(u)duf^{\prime }(t)\right\vert dt
\\
&\leq &\left[ \int_{a}^{b}\left\vert \int_{x}^{t}g(u)du\right\vert dt\right]
^{\frac{q-1}{q}}\left[ \int_{a}^{b}\left\vert \int_{x}^{t}g(u)du\right\vert
\left\vert f^{\prime }(t)\right\vert ^{q}dt\right] ^{\frac{1}{q}} \\
&\leq &\left\Vert g\right\Vert _{\infty }\left[ \int_{a}^{b}\left\vert
t-x\right\vert dt\right] ^{\frac{q-1}{q}}\left[ \int_{a}^{b}\left\vert
t-x\right\vert \left\vert f^{\prime }(t)\right\vert ^{q}dt\right] ^{\frac{1}{%
q}} \\
&=&\left\Vert g\right\Vert _{\infty }\left[ \frac{\left( x-a\right)
^{2}+\left( b-x\right) ^{2}}{2}\right] ^{\frac{q-1}{q}}\left[
\int_{a}^{b}\left\vert t-x\right\vert \left\vert f^{\prime }\left( \frac{b-t%
}{b-a}a+\frac{t-a}{b-a}b\right) \right\vert ^{q}dt\right] ^{\frac{1}{q}} \\
&\leq &\left\Vert g\right\Vert _{\infty }\left[ \frac{\left( x-a\right)
^{2}+\left( b-x\right) ^{2}}{2}\right] ^{\frac{q-1}{q}} \\
&&\times \left[ \int_{a}^{b}\left\vert t-x\right\vert \left[ \left( \frac{b-t%
}{b-a}\right) ^{\alpha }\left\vert f^{\prime }(a)\right\vert ^{q}+m\left(
1-\left( \frac{b-t}{b-a}\right) ^{\alpha }\right) \left\vert f^{\prime
}\left( \frac{b}{m}\right) \right\vert ^{q}\right] dt\right] ^{\frac{1}{q}}
\end{eqnarray*}%
where 
\begin{equation*}
\int_{a}^{b}\left\vert t-x\right\vert \left( \frac{b-t}{b-a}\right) ^{\alpha
}dt=\frac{\left( b-a\right) ^{\alpha +1}\left[ 2x-b-a+\alpha \left(
x-a\right) \right] +2\left( b-x\right) ^{\alpha +2}}{(\alpha +1)(\alpha
+2)\left( b-a\right) ^{\alpha }}
\end{equation*}%
and%
\begin{eqnarray*}
&&\int_{a}^{b}\left\vert t-x\right\vert \left( 1-\left( \frac{b-t}{b-a}%
\right) ^{\alpha }\right) dt \\
&=&\frac{\left( x-a\right) ^{2}+\left( b-x\right) ^{2}}{2}-\frac{\left(
b-a\right) ^{\alpha +1}\left[ 2x-b-a+\alpha \left( x-a\right) \right]
+2\left( b-x\right) ^{\alpha +2}}{(\alpha +1)(\alpha +2)\left( b-a\right)
^{\alpha }}.
\end{eqnarray*}%
The proof is completed.
\end{proof}

\begin{corollary}
\label{co 2.1} Let $g:[a,b]\rightarrow 
\mathbb{R}
$ symmetric to $\frac{a+b}{2}$ and $x=\frac{a+b}{2}$ in Theorem \ref{teo 2.1}%
. Then we have the inequality%
\begin{eqnarray*}
&&\left\vert \frac{f(a)+f(b)}{2}\int_{a}^{b}g(u)du-\int_{a}^{b}f(u)g(u)du%
\right\vert \\
&\leq &\left\Vert g\right\Vert _{\infty }\left[ \frac{1}{\left( \alpha
+1\right) \left( \alpha +2\right) }\right] ^{\frac{1}{q}}\frac{\left(
b-a\right) ^{2}}{4^{1-\frac{1}{q}}} \\
&&\times \left[ \frac{\alpha 2^{\alpha }+1}{2^{\alpha +1}}\left\vert
f^{\prime }(a)\right\vert ^{q}+m\frac{2^{\alpha }\left( \alpha ^{2}+\alpha
+2\right) -2}{2^{\alpha +2}}\left\vert f^{\prime }\left( \frac{b}{m}\right)
\right\vert ^{q}\right] ^{\frac{1}{q}}.
\end{eqnarray*}
\end{corollary}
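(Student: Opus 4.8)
The plan is to specialize Theorem~\ref{teo 2.1} to $x=\frac{a+b}{2}$ and to exploit the symmetry of $g$ to collapse the two boundary integrals into one. First I would use that $g$ is symmetric about $\frac{a+b}{2}$: the substitution $u=a+b-s$ gives $\int_a^{(a+b)/2}g(s)\,ds=\int_{(a+b)/2}^{b}g(s)\,ds$, so each half-integral equals $\frac12\int_a^{b}g(s)\,ds$. Consequently the left-hand side of Theorem~\ref{teo 2.1} becomes $\left\vert\frac{f(a)+f(b)}{2}\int_a^{b}g(u)\,du-\int_a^{b}f(u)g(u)\,du\right\vert$, which is exactly the left-hand side asserted here.

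Next I would substitute $x=\frac{a+b}{2}$ into the right-hand side. Since $x-a=b-x=\frac{b-a}{2}$, the prefactor simplifies through $\frac{(x-a)^2+(b-x)^2}{2}=\frac{(b-a)^2}{4}$, so the power-mean factor becomes $\left[\frac{(b-a)^2}{4}\right]^{(q-1)/q}$. The bulk of the work is evaluating $M$ at this point: the term $2x-b-a$ vanishes, $\alpha(x-a)=\frac{\alpha(b-a)}{2}$, and $(b-x)^{\alpha+2}=\frac{(b-a)^{\alpha+2}}{2^{\alpha+2}}$. After factoring $(b-a)^{\alpha+2}$ out of the numerator this yields $M=\frac{(b-a)^2(\alpha 2^{\alpha}+1)}{2^{\alpha+1}(\alpha+1)(\alpha+2)}$, which accounts for the coefficient of $\vert f^{\prime}(a)\vert^q$.

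Then I would compute the coefficient of $m\vert f^{\prime}(b/m)\vert^q$, namely $\frac{(b-a)^2}{4}-M$. Placing both terms over the common denominator $2^{\alpha+2}(\alpha+1)(\alpha+2)$ and using $(\alpha+1)(\alpha+2)=\alpha^2+3\alpha+2$, the numerator collapses to $2^{\alpha}(\alpha^2+\alpha+2)-2$, so that $\frac{(b-a)^2}{4}-M=\frac{(b-a)^2\left(2^{\alpha}(\alpha^2+\alpha+2)-2\right)}{2^{\alpha+2}(\alpha+1)(\alpha+2)}$, matching the second coefficient in the claim.

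Finally I would factor the common constant $\frac{(b-a)^2}{(\alpha+1)(\alpha+2)}$ out of the braces, pull $\left[\frac{(b-a)^2}{(\alpha+1)(\alpha+2)}\right]^{1/q}$ outside the $q$-th root, and merge it with the prefactor. The powers of $b-a$ combine as $\frac{2(q-1)}{q}+\frac{2}{q}=2$, giving $(b-a)^2$, while $4^{-(q-1)/q}=4^{-(1-1/q)}$ produces the $4^{1-1/q}$ in the denominator, and $[(\alpha+1)(\alpha+2)]^{-1/q}$ gives the stated bracket. Collecting these reproduces the asserted inequality exactly. The only genuine obstacle is the bookkeeping in evaluating $M$ and the subtraction $\frac{(b-a)^2}{4}-M$; the rest is direct substitution and regrouping of exponents.
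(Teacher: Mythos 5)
Your proposal is correct and follows exactly the route the paper intends: the corollary is a direct specialization of Theorem \ref{teo 2.1} at $x=\frac{a+b}{2}$, with the symmetry of $g$ collapsing $f(a)\int_a^x g+f(b)\int_x^b g$ into $\frac{f(a)+f(b)}{2}\int_a^b g$, and your evaluations of $M=\frac{(b-a)^2(\alpha 2^{\alpha}+1)}{2^{\alpha+1}(\alpha+1)(\alpha+2)}$ and of $\frac{(b-a)^2}{4}-M=\frac{(b-a)^2\left(2^{\alpha}(\alpha^2+\alpha+2)-2\right)}{2^{\alpha+2}(\alpha+1)(\alpha+2)}$ both check out. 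The final regrouping of the powers of $b-a$, of $4$, and of $(\alpha+1)(\alpha+2)$ reproduces the stated bound exactly, so nothing is missing.
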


\begin{remark}
\label{rem 2.1} In Corollary \ref{co 2.1}, if we choose $\alpha =m=1,$ we
have the inequality in (\ref{1.4}).
\end{remark}

\begin{remark}
\label{rem 2.2} In Theorem \ref{teo 2.1}, if we choose $\alpha =m=1,$ we
have the inequality in (\ref{1.3}).
\end{remark}

\begin{theorem}
\label{teo 2.2} Let the assumptions of Theorem \ref{teo 2.1} hold. Then, for
all $x\in \lbrack a,b],$ we have the inequality:%
\begin{eqnarray*}
&&\left\vert f(x)\int_{a}^{b}g(u)du-\int_{a}^{b}f(u)g(u)du\right\vert  \\
&\leq &\left\Vert g\right\Vert _{\infty }\left[ \frac{\left( x-a\right)
^{2}+\left( b-x\right) ^{2}}{2}\right] ^{\frac{q-1}{q}} \\
&&\times \left\{ A\left\vert f^{\prime }(a)\right\vert ^{q}+m\left[ \frac{%
\left( x-a\right) ^{2}+\left( b-x\right) ^{2}}{2}-A\right] \left\vert
f^{\prime }\left( \frac{b}{m}\right) \right\vert ^{q}\right\} ^{\frac{1}{q}},
\end{eqnarray*}%
where $A=\frac{\left( b-a\right) ^{\alpha +2}+\left( b-x\right) ^{\alpha +1}%
\left[ \left( a-x\right) \left( 2+\alpha \right) +\alpha \left( b-x\right) %
\right] }{(\alpha +1)(\alpha +2)\left( b-a\right) ^{\alpha }}.$
\end{theorem}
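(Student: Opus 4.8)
The plan is to mirror the proof of Theorem~\ref{teo 2.1}, replacing the identity of Lemma~\ref{lem 1.1} by that of Lemma~\ref{lem 1.2}. First I would write
\[
f(x)\int_{a}^{b}g(u)du-\int_{a}^{b}f(u)g(u)du=\int_{a}^{b}S_{g}(t)f^{\prime }(t)dt,
\]
take absolute values, and apply the pointwise estimate $\left\vert S_{g}(t)\right\vert \leq \left\Vert g\right\Vert _{\infty }S(t)$ supplied by Lemma~\ref{lem 1.2}, where $S(t)=t-a$ on $[a,x)$ and $S(t)=b-t$ on $[x,b]$. This is the only structural change from Theorem~\ref{teo 2.1}: the weight $\left\vert t-x\right\vert$ there is now replaced by $S(t)$. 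Since $\int_{a}^{b}S(t)dt=\frac{(x-a)^{2}+(b-x)^{2}}{2}$ coincides with $\int_{a}^{b}\left\vert t-x\right\vert dt$, the outer factor $\left[ \frac{(x-a)^{2}+(b-x)^{2}}{2}\right] ^{\frac{q-1}{q}}$ is unchanged.

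Next I would apply the Power-mean integral inequality to $\int_{a}^{b}S(t)\left\vert f^{\prime }(t)\right\vert dt$, writing it as $\left( \int_{a}^{b}S(t)dt\right) ^{\frac{q-1}{q}}\left( \int_{a}^{b}S(t)\left\vert f^{\prime }(t)\right\vert ^{q}dt\right) ^{\frac{1}{q}}$. Using the representation $t=\frac{b-t}{b-a}a+m\left( 1-\frac{b-t}{b-a}\right) \frac{b}{m}$ together with the $\left( \alpha ,m\right) $-convexity of $\left\vert f^{\prime }\right\vert ^{q}$, I would bound
\[
\left\vert f^{\prime }(t)\right\vert ^{q}\leq \left( \frac{b-t}{b-a}\right) ^{\alpha }\left\vert f^{\prime }(a)\right\vert ^{q}+m\left( 1-\left( \frac{b-t}{b-a}\right) ^{\alpha }\right) \left\vert f^{\prime }\left( \frac{b}{m}\right) \right\vert ^{q},
\]
exactly as in the proof of Theorem~\ref{teo 2.1}.

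It then remains to evaluate the weighted integrals. The coefficient of $\left\vert f^{\prime }(a)\right\vert ^{q}$ is $A=\int_{a}^{b}S(t)\left( \frac{b-t}{b-a}\right) ^{\alpha }dt$, while the coefficient of $m\left\vert f^{\prime }\left( \frac{b}{m}\right) \right\vert ^{q}$ is $\int_{a}^{b}S(t)dt-A=\frac{(x-a)^{2}+(b-x)^{2}}{2}-A$. The main obstacle is the evaluation of $A$: splitting at $x$ gives
\[
A=\frac{1}{(b-a)^{\alpha }}\left[ \int_{a}^{x}(t-a)(b-t)^{\alpha }dt+\int_{x}^{b}(b-t)^{\alpha +1}dt\right] .
\]
The second integral is immediate, equal to $\frac{(b-x)^{\alpha +2}}{\alpha +2}$. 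For the first I would substitute $u=b-t$, so that $t-a=(b-a)-u$ and the integrand becomes $((b-a)-u)u^{\alpha }$, which integrates to an expression in the powers $(b-a)^{\alpha +1},(b-a)^{\alpha +2},(b-x)^{\alpha +1},(b-x)^{\alpha +2}$. Placing everything over the common denominator $(\alpha +1)(\alpha +2)(b-a)^{\alpha }$ and collecting terms yields the stated closed form for $A$; substituting $A$ and its complement into the Power-mean bound then completes the proof.
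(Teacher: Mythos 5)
Your proposal is correct and follows essentially the same route as the paper's own proof: the identity and bound $\left\vert S_{g}(t)\right\vert \leq \left\Vert g\right\Vert _{\infty }S(t)$ from Lemma \ref{lem 1.2}, the Power-mean inequality with $\int_{a}^{b}S(t)dt=\frac{(x-a)^{2}+(b-x)^{2}}{2}$, and the $(\alpha ,m)$-convexity bound on $\left\vert f^{\prime }(t)\right\vert ^{q}$, with the coefficient $A$ obtained by splitting $\int_{a}^{b}S(t)\left( \frac{b-t}{b-a}\right) ^{\alpha }dt$ at $x$ (the paper evaluates the two pieces separately rather than via your substitution $u=b-t$, but the resulting closed form agrees with the stated $A$). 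Nothing is missing.
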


\begin{proof}
Using the identities in Lemma \ref{lem 1.2}, Power-mean integral inequality
and the $\left( \alpha ,m\right) -$convexity of $\left\vert f^{\prime
}\right\vert ^{q},$ it follows that%
\begin{eqnarray*}
&&\left\vert f(x)\int_{a}^{b}g(u)du-\int_{a}^{b}f(u)g(u)du\right\vert  \\
&\leq &\int_{a}^{b}\left\vert S_{g}(t)\right\vert \left\vert f^{\prime
}(t)\right\vert dt \\
&\leq &\left\Vert g\right\Vert _{\infty }\int_{a}^{b}S(t)\left\vert
f^{\prime }(t)\right\vert dt \\
&\leq &\left\Vert g\right\Vert _{\infty }\left[ \int_{a}^{b}S(t)dt\right] ^{%
\frac{q-1}{q}} \\
&&\times \left[ \int_{a}^{x}\left( t-a\right) \left\vert f^{\prime }\left( 
\frac{b-t}{b-a}a+\frac{t-a}{b-a}b\right) \right\vert
^{q}dt+\int_{x}^{b}\left( b-t\right) \left\vert f^{\prime }\left( \frac{b-t}{%
b-a}a+\frac{t-a}{b-a}b\right) \right\vert ^{q}dt\right] ^{\frac{1}{q}} \\
&\leq &\left\Vert g\right\Vert _{\infty }\left[ \int_{a}^{b}S(t)dt\right] ^{%
\frac{q-1}{q}} \\
&&\times \left\{ \int_{a}^{x}\left( t-a\right) \left( \frac{b-t}{b-a}\right)
^{\alpha }\left\vert f^{\prime }(a)\right\vert
^{q}dt+\int_{x}^{b}(b-t)\left( \frac{b-t}{b-a}\right) ^{\alpha }\left\vert
f^{\prime }\left( a\right) \right\vert ^{q}dt\right.  \\
&&\left. +\int_{a}^{x}\left( t-a\right) m\left( 1-\left( \frac{b-t}{b-a}%
\right) ^{\alpha }\right) \left\vert f^{\prime }\left( \frac{b}{m}\right)
\right\vert ^{q}dt+\int_{x}^{b}\left( b-t\right) m\left( 1-\left( \frac{b-t}{%
b-a}\right) ^{\alpha }\right) \left\vert f^{\prime }\left( \frac{b}{m}%
\right) \right\vert ^{q}dt\right\} ^{^{\frac{1}{q}}}
\end{eqnarray*}%
where%
\begin{equation*}
\int_{a}^{b}S(t)dt=\frac{\left( x-a\right) ^{2}+\left( b-x\right) ^{2}}{2},
\end{equation*}%
\begin{equation*}
\int_{a}^{x}\left( t-a\right) \left( \frac{b-t}{b-a}\right) ^{\alpha }dt=%
\frac{\left( b-a\right) ^{\alpha +2}+\left( b-x\right) ^{\alpha +1}\left[
2a-b-x+\alpha \left( a-x\right) \right] }{\left( b-a\right) ^{\alpha
}(\alpha +1)(\alpha +2)},
\end{equation*}%
\begin{equation*}
\int_{x}^{b}\frac{(b-t)^{\alpha +1}}{\left( b-a\right) ^{\alpha }}dt=\frac{%
\left( b-x\right) ^{\alpha +2}}{\left( b-a\right) ^{\alpha }\left( \alpha
+2\right) },
\end{equation*}%
\begin{equation*}
\int_{x}^{b}\left( b-t\right) m\left( 1-\left( \frac{b-t}{b-a}\right)
^{\alpha }\right) dt=m\left[ \frac{\left( b-x\right) ^{2}}{2}-\frac{\left(
b-x\right) ^{\alpha +2}}{\left( b-a\right) ^{\alpha }\left( \alpha +2\right) 
}\right] 
\end{equation*}%
and%
\begin{eqnarray*}
&&\int_{a}^{x}\left( t-a\right) \left( 1-\left( \frac{b-t}{b-a}\right)
^{\alpha }\right) dt \\
&=&m\left[ \frac{\left( x-a\right) ^{2}}{2}-\frac{\left( b-a\right) ^{\alpha
+2}+\left( b-x\right) ^{\alpha +1}\left[ 2a-b-x+\alpha \left( a-x\right) %
\right] }{\left( b-a\right) ^{\alpha }(\alpha +1)(\alpha +2)}\right] .
\end{eqnarray*}%
The proof is completed.
\end{proof}

\begin{corollary}
\label{co 2.2} Let $x=\frac{a+b}{2}$ in Theorem \ref{teo 2.2}. Then we have
the following inequality:%
\begin{eqnarray*}
&&\left\vert f\left( \frac{a+b}{2}\right)
\int_{a}^{b}g(u)du-\int_{a}^{b}f(u)g(u)du\right\vert \\
&\leq &\left\Vert g\right\Vert _{\infty }\frac{\left( b-a\right) ^{2}}{4^{1-%
\frac{1}{q}}}\left[ \frac{1}{(\alpha +1)(\alpha +2)}\right] ^{\frac{1}{q}} \\
&&\times \left\{ \left[ \frac{2^{\alpha +1}-1}{2^{\alpha +1}}\right]
\left\vert f^{\prime }(a)\right\vert ^{q}+m\left[ \frac{2^{\alpha }\left[
\alpha ^{2}+3\alpha -2\right] +2}{2^{\alpha +2}}\right] \left\vert f^{\prime
}\left( \frac{b}{m}\right) \right\vert ^{q}\right\} ^{\frac{1}{q}}.
\end{eqnarray*}

\begin{remark}
\label{rem 2.3} In Corollary \ref{co 2.2}, if we choose $\alpha =m=1,$ we
have the inequality in (\ref{1.6}).
\end{remark}

\begin{remark}
\label{rem 2.4} In Theorem \ref{teo 2.2}, if we choose $\alpha =m=1,$ we
have the inequality in (\ref{1.5}).
\end{remark}

\begin{remark}
\label{rem 2.5} In Theorems \ref{teo 2.1}-\ref{teo 2.2} and Corollaries \ref%
{co 2.1}-\ref{co 2.2}, if we choose $\alpha =1$ we obtain inequalities for $%
m-$convex functions.
\end{remark}
\end{corollary}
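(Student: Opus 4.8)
The plan is to specialize Theorem \ref{teo 2.2} at the midpoint $x=\frac{a+b}{2}$ and then carry out the resulting algebraic simplifications until the stated closed form emerges; no new analytic input is needed beyond Theorem \ref{teo 2.2} itself. The first step is to record the elementary consequences of the choice $x=\frac{a+b}{2}$, namely $x-a=b-x=\frac{b-a}{2}$, which immediately give $\frac{(x-a)^{2}+(b-x)^{2}}{2}=\frac{(b-a)^{2}}{4}$. This converts the power-mean prefactor $\left[\frac{(x-a)^{2}+(b-x)^{2}}{2}\right]^{\frac{q-1}{q}}$ into $\left[\frac{(b-a)^{2}}{4}\right]^{\frac{q-1}{q}}$ and reduces the total weight $\frac{(x-a)^{2}+(b-x)^{2}}{2}$ appearing inside the braces to $\frac{(b-a)^{2}}{4}$.

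Next I would evaluate the constant $A$ of Theorem \ref{teo 2.2} at $x=\frac{a+b}{2}$. With $b-x=\frac{b-a}{2}$ and $a-x=-\frac{b-a}{2}$, the only nontrivial simplification is the bracket $\left(a-x\right)\left(2+\alpha\right)+\alpha\left(b-x\right)$, which collapses to $-(b-a)$ since the two $\alpha$-contributions cancel. Substituting $\left(b-x\right)^{\alpha+1}=\frac{(b-a)^{\alpha+1}}{2^{\alpha+1}}$ then turns the numerator of $A$ into $(b-a)^{\alpha+2}\left(1-\frac{1}{2^{\alpha+1}}\right)$, so that
\[
A=\frac{(b-a)^{2}}{(\alpha+1)(\alpha+2)}\cdot\frac{2^{\alpha+1}-1}{2^{\alpha+1}}.
\]
This already exhibits the coefficient $\frac{2^{\alpha+1}-1}{2^{\alpha+1}}$ of $\left\vert f^{\prime}(a)\right\vert^{q}$ claimed in the corollary.

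For the coefficient of $m\left\vert f^{\prime}\left(\frac{b}{m}\right)\right\vert^{q}$ I would compute the complementary weight $\frac{(b-a)^{2}}{4}-A$. Placing the two terms over a common denominator, using $(\alpha+1)(\alpha+2)=\alpha^{2}+3\alpha+2$, and cancelling a factor of $2$, the numerator reduces to $2^{\alpha}\left(\alpha^{2}+3\alpha-2\right)+2$, which yields
\[
\frac{(b-a)^{2}}{4}-A=\frac{(b-a)^{2}}{(\alpha+1)(\alpha+2)}\cdot\frac{2^{\alpha}\left(\alpha^{2}+3\alpha-2\right)+2}{2^{\alpha+2}}.
\]
The final step is to factor the common block $\frac{(b-a)^{2}}{(\alpha+1)(\alpha+2)}$ out of the braces; raising it to the power $\frac{1}{q}$ and multiplying by the prefactor $\left[\frac{(b-a)^{2}}{4}\right]^{\frac{q-1}{q}}$, the powers of $(b-a)$ recombine through $\frac{2(q-1)}{q}+\frac{2}{q}=2$ into a single $(b-a)^{2}$, while $4^{\frac{q-1}{q}}=4^{1-\frac{1}{q}}$ supplies the stated denominator and the leftover $\left[\frac{1}{(\alpha+1)(\alpha+2)}\right]^{\frac{1}{q}}$ is exactly the remaining factor; assembling these pieces reproduces the displayed inequality. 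I expect the main obstacle to be purely bookkeeping: correctly collecting the powers of $2$ so that the complementary numerator lands on $2^{\alpha}\left(\alpha^{2}+3\alpha-2\right)+2$ rather than a factor-of-two- or sign-shifted variant, and keeping the fractional exponents of $(b-a)$ and of $4$ aligned when the common block is pulled through the $\frac{1}{q}$-power.
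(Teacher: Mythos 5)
Your proposal is correct: the substitutions $x-a=b-x=\frac{b-a}{2}$, the collapse of $(a-x)(2+\alpha)+\alpha(b-x)$ to $-(b-a)$, and the resulting values $A=\frac{(b-a)^{2}}{(\alpha+1)(\alpha+2)}\cdot\frac{2^{\alpha+1}-1}{2^{\alpha+1}}$ and $\frac{(b-a)^{2}}{4}-A=\frac{(b-a)^{2}}{(\alpha+1)(\alpha+2)}\cdot\frac{2^{\alpha}(\alpha^{2}+3\alpha-2)+2}{2^{\alpha+2}}$ all check out, and the exponent bookkeeping $\frac{2(q-1)}{q}+\frac{2}{q}=2$ with $4^{\frac{q-1}{q}}=4^{1-\frac{1}{q}}$ reassembles exactly the stated bound. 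This is precisely the direct specialization the paper intends (it gives no separate proof of the corollary), so your route coincides with the paper's.
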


\end{document}